\theoremstyle{plain}
\newtheorem*{NewTheoremA}{Theorem A}
\newtheorem*{NewPropositionB}{Proposition B}
\newtheorem*{NewCorollaryC}{Corollary C}
\newtheorem{theorem}{Theorem}[section]
\newtheorem{lemma}[theorem]{Lemma}
\newtheorem{example}[theorem]{Example}
\newtheorem{proposition}[theorem]{Proposition}
\theoremstyle{definition}
\newtheorem{definition}[theorem]{Definition}
\title[]{On number of ends of graph products of groups}
\author{Olga Varghese}
\date{\today}
\address{Olga Varghese\\
Department of Mathematics\\
M\"unster University\\ 
Einsteinstra\ss e 62\\
48149 M\"unster (Germany)}
\email{olga.varghese@uni-muenster.de}
\begin{document}
\pagenumbering{arabic}
\begin{abstract}

Given a finite simplicial graph $\Gamma=(V,E)$ with a vertex-labelling $\varphi:V\rightarrow\left\{\text{non-trivial finitely generated groups}\right\}$, the graph product $G_\Gamma$ is the free product of the vertex groups $\varphi(v)$ with added relations that imply elements of adjacent vertex groups commute. For a quasi-isometric invariant $\mathcal{P}$, we are interested in understanding under which combinatorial conditions on the graph $\Gamma$ the graph product $G_\Gamma$ has  property $\mathcal{P}$. In this article our emphasis  is on number of ends of a graph product $G_\Gamma$. In particular, we   obtain a complete characterization of number of ends of a graph product of finitely generated groups. 
\end{abstract}

\keywords{Graph products of groups, quasi-isometric invariants, number of ends}
\subjclass[2010]{Primary: 20F65}
\thanks{{Funded by the Deutsche 
Forschungsgemeinschaft (DFG, German Research Foundation) under Germany's 
Excellence Strategy EXC 2044--390685587, Mathematics M\"unster: Dynamics-Geometry-Structure.}}
\maketitle

\section{Introduction}
As the main protagonists in this article are graph products of groups $G_\Gamma$, we start with the definition of these objects.
Given a finite simplicial graph $\Gamma=(V, E)$ with a vertex-labelling 
$\varphi:V\rightarrow\left\{\text{non-trivial finitely generated groups}\right\}$, the \emph{graph product of groups} $G_\Gamma$ is defined as the quotient
\[
\left( \underset{v\in V}{\ast} \varphi(v) \right) / \langle \langle [\varphi(v),\varphi(w)]=1, \{u,w\} \in E \rangle \rangle.
\]
Before we proceed with the introduction we discuss some examples of graph products of groups. Let us consider the following vertex-labelled graphs:
\begin{figure}[h]
\begin{center}
\begin{tikzpicture}
\node at (-0.7,1.8) {$\Gamma_1$};
\draw[fill=black]  (0,0) circle (2pt);
\node at (-0.8,-0.3) {\tiny{${\rm Aut}(\mathbb{Z}_2*\mathbb{Z}_2*\mathbb{Z}_2*\mathbb{Z}_2)$}};
\draw[fill=black]  (1,0) circle (2pt);
\node at (1.3,-0.3) {\tiny{$\mathbb{Z}_2$}};
\draw[fill=black]  (1,1) circle (2pt);
\node at (1.2,1.3) {\tiny{$\mathbb{Z}_{23}$}};
\draw[fill=black]  (0,1) circle (2pt);
\node at (-0.2,1.3) {\tiny{${\rm Sym}(3)$}};

\node at (2.7,1.8) {$\Gamma_2$};
\draw[fill=black]  (3,0) circle (2pt);
\node at (2.8,-0.3) {\tiny{$\mathbb{Z}_2$}};
\draw[fill=black]  (4,0) circle (2pt);
\node at (4.3,-0.3) {\tiny{$\mathbb{Z}_2$}};
\draw[fill=black]  (4,1) circle (2pt);
\node at (4.2,1.3) {\tiny{$\mathbb{Z}_{2}$}};
\draw[fill=black]  (3,1) circle (2pt);
\node at (2.8,1.3) {\tiny{$\mathbb{Z}_2$}};
\draw (3,0)--(4,0);
\draw (4,0)--(4,1);
\draw (4,1)--(3,1);
\draw (3,1)--(3,0);

\node at (5.7,1.8) {$\Gamma_3$};
\draw[fill=black]  (6,0) circle (2pt);
\node at (5.8,-0.3) {\tiny{$\mathbb{Z}$}};
\draw[fill=black]  (7,0) circle (2pt);
\node at (7.3,-0.3) {\tiny{$\mathbb{Z}$}};
\draw[fill=black]  (7,1) circle (2pt);
\node at (7.2,1.3) {\tiny{$\mathbb{Z}$}};
\draw[fill=black]  (6,1) circle (2pt);
\node at (5.8,1.3) {\tiny{$\mathbb{Z}$}};

\draw (6,0)--(7,0);
\draw (6,0)--(6,1);
\draw (6,0)--(7,1);
\draw (7,0)--(7,1);
\draw (6,1)--(7,1);
\draw (7,0)--(6,1);

\end{tikzpicture}
\end{center}
\end{figure}

The graph $\Gamma_1$ has no edges, hence $G_{\Gamma_1}={\rm Sym}(3)*\mathbb{Z}_{23}*\mathbb{Z}_2*{\rm Aut}(\mathbb{Z}_2*\mathbb{Z}_2*\mathbb{Z}_2*\mathbb{Z}_2)$. The graph product $G_{\Gamma_2}=(\mathbb{Z}_2*\mathbb{Z}_2)\times(\mathbb{Z}_2*\mathbb{Z}_2)$ and $G_{\Gamma_3}=\mathbb{Z}\times\mathbb{Z}\times\mathbb{Z}\times\mathbb{Z}$. Important cases of graph products of groups arise when we specify the vertex groups.
If all vertex groups are cyclic of order two, then $G_\Gamma$ is a right-angled Coxeter group and if all vertex groups are infinite cyclic, then $G_\Gamma$ is a right-angled Artin group. 

Let us observe one more example.
\begin{tiny}
\begin{center}
\begin{tikzpicture}
\draw (18:2cm) -- (90:2cm) -- (162:2cm) -- (234:2cm) --
(306:2cm) -- cycle;
\draw (18:1cm) -- (162:1cm) -- (306:1cm) -- (90:1cm) --
(234:1cm) -- cycle;

\foreach \x in {18,90,162,234,306}{
\draw (\x:1cm) -- (\x:2cm);
\draw[fill=black] (\x:2cm) circle (2pt);
\draw[fill=black] (\x:1cm) circle (2pt);

}
\node at (16:2.3cm) {\tiny{$\mathbb{Z}_3$}};
\node at (88:2.3cm) {\tiny{$\mathbb{Z}_3$}};
\node at (164:2.3cm) {\tiny{$\mathbb{Z}_3$}};
\node at (236:2.3cm) {\tiny{$\mathbb{Z}_3$}};
\node at (304:2.3cm) {\tiny{$\mathbb{Z}_3$}};

\node at (26:1.3cm) {\tiny{$\mathbb{Z}_3$}};
\node at (98:1.3cm) {\tiny{$\mathbb{Z}_3$}};
\node at (154:1.3cm) {\tiny{$\mathbb{Z}_3$}};
\node at (225:1.3cm) {\tiny{$\mathbb{Z}_3$}};
\node at (315:1.3cm) {\tiny{$\mathbb{Z}_3$}};

\node at (-2,2.2) {\normalsize{$\Gamma_4$}};
\end{tikzpicture}
\end{center}
\end{tiny}
The corresponding graph product $G_{\Gamma_4}$ does not admit a description in terms of free and direct products, but it can be written as a non-trivial  amalgamated product of graph products of groups $G_{\Delta_1}*_{G_{\Delta_2}} G_{\Delta_3}$, where $\Delta_1, \Delta_2$ and $\Delta_3$ are full subgraphs of $\Gamma_4$.

An interesting and challenging question is how graph properties of $\Gamma$ influence the algebraic and geometric structure of $G_{\Gamma}$. Many group theoretical properties of graph products of groups have been translated in combinatorial structure of the defining graph $\Gamma$. For example, Gromov hyperbolicity, virtual freeness, planarity of the Cayley graph with respect to the canonical generating set \cite[Theorem A]{V} and coherence \cite[Theorem A]{VVV}. Let us focus on the first two properties and give translations of these geometric structures in the combinatorial world of graphs. 
\vspace{0.15cm}

\begin{center}
{\bf Graph products of finite vertex groups dictionary}
\end{center}
\begin{center}
\begin{tabular}{|l|l|l|}
\hline
\multicolumn{1}{|c|}{\emph{Property of $G_\Gamma$}} &\multicolumn{1}{c|}{\emph{Structure of $\Gamma$}}& \multicolumn{1}{c|}{\emph{Reference}}\\
\hline 
hyperbolic & $\Gamma$ contains no induced cycle of length $4$ & \cite[Main Theorem]{Meier}\\
\hline 
virtually free  &  $\Gamma$ contains no induced cycle of length $\geq 4$  & \cite[Theorem 1.1]{LS} \\
\hline 
 \end{tabular}
\end{center}
\hspace{1cm}

Hence, hyperbolicity and virtual freeness of $G_\Gamma$ can be checked easily on the graph $\Gamma$. The above graph $\Gamma_4$ is the Petersen graph with a vertex-labelling. This graph has no induced cycle of length $4$, but it has an induced cycle of length $5$. Therefore the group $G_{\Gamma_4}$ is hyperbolic, but not virtually free. 

Gromov hyperbolicity and virtual freeness are quasi-isometric invariants. 
Another example of a quasi-isometric invariant is the number of ends. For a finitely generated group $G$ the \emph{number of ends}, denoted by $e(G)$, is defined as follows: 
\[
e(G):=\sup\left\{ \#\pi^{u}_0({\rm Cay}(G, S)-C)\mid C\subseteq{\rm Cay}(G, S)\text{ finite subgraph}\right\},
\]
where ${\rm Cay}(G, S)$ is the Cayley graph for $G$ with respect to a finite generating set $S$ and $\#\pi^{u}_0({\rm Cay}(G,$  $S)-C)$ is the number of unbounded connected components of ${\rm Cay}(G, S)-C$. It was proven by Hopf in \cite{Hopf} that a finitely generated group has $0, 1, 2$ or $\infty$ ends.  Given two finitely generated infinite groups $G$ and $H$, the number of ends of the direct product $G\times H$ is $1$ and of the free product $G*H$ is $\infty$. Graph products of groups interpolate between direct and free products, hence it is natural to ask how the shape of the graph $\Gamma$ determines the number of ends of the graph product of groups $G_\Gamma$. We prove that the number of ends of a graph product of groups $G_\Gamma$ is visible in the defining graph $\Gamma$.

\begin{NewTheoremA}
Let $G_\Gamma$ be a graph product of non-trivial finitely generated groups. The graph product $G_\Gamma$ has more than one end if and only if either:
\begin{enumerate}
\item[(i)] The graph $\Gamma$ is complete with exactly one vertex group which has more than one end and all others vertex groups are finite, or
\item[(ii)] there are non-empty full subgraphs $\Gamma_1$ and $\Gamma_2$ of $\Gamma$  (neither containing the other) such that $G_\Gamma$ decomposes as the (non-trivial) amalgamated product 
\[
G_{\Gamma}\cong G_{\Gamma_1}*_{G_{\Gamma_1\cap\Gamma_2}} G_{\Gamma_2}
\]
where $G_{\Gamma_1\cap\Gamma_2}$ is a finite group. 
\end{enumerate}
\end{NewTheoremA}

A simplicial graph $\Gamma=(V,E)$ is said to have a \emph{separating subgraph} $\Delta=(X,Y)$ if  the induced subgraph of $\Gamma$ with the vertex set $V-X$ has at least two connected components. Note that, if the graph $\Gamma$ is disconnected then the empty graph is a separating subgraph. Theorem A implies that in order to check  if a non-complete graph product of groups has more than one end or not, one only need check $\Gamma$ for complete separating subgraphs each of whose vertex groups are finite.
\begin{NewPropositionB}
Let $G_\Gamma$ be a graph product of non-trivial finitely generated groups. Decompose $\Gamma$ as a join $\Gamma_1*\Gamma_2$ where $\Gamma_1$ is a subgraph generated by the vertices which are adjacent to all the vertices of $\Gamma$. The graph product $G_\Gamma$ is $2$-ended if and only if either:
\begin{enumerate}
\item[(i)] The graph $\Gamma$ is complete with exactly one $2$-ended vertex group and all other vertex groups being finite, or
\item[(ii)] the subgraph $\Gamma_1$ is complete with finite vertex groups and $\Gamma_2=(\left\{\mathbb{Z}_2, \mathbb{Z}_2\right\},\emptyset)$.
\end{enumerate} 
\end{NewPropositionB}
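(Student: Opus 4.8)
The plan is to exploit the join decomposition that underlies the statement. Writing $\Gamma=\Gamma_1*\Gamma_2$ with $\Gamma_1$ the set of vertices adjacent to every other vertex, the group $G_\Gamma$ splits as a direct product $G_{\Gamma_1}\times G_{\Gamma_2}$, and $\Gamma_1$ is a clique, so $G_{\Gamma_1}$ is the direct product of its (non-trivial, finitely generated) vertex groups. I will use two standard inputs: first, that a finitely generated group is $2$-ended if and only if it is infinite and virtually $\mathbb{Z}$, so in particular a $2$-ended group contains neither a non-abelian free subgroup $F_2$ nor a copy of $\mathbb{Z}^2$; second, that the direct product of two infinite finitely generated groups is one-ended, while a finite factor leaves the number of ends unchanged. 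The implication from (i) or (ii) to $2$-endedness is then immediate: in case (i), $G_\Gamma$ is (a $2$-ended group)$\times$(finite), hence $2$-ended; in case (ii), $G_\Gamma\cong G_{\Gamma_1}\times(\mathbb{Z}_2*\mathbb{Z}_2)$ is (finite)$\times D_\infty$, again $2$-ended.

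For the converse, assume $G_\Gamma$ is $2$-ended. If $\Gamma$ is complete then $G_\Gamma=G_{\Gamma_1}$ is a direct product of the vertex groups, which by the ends-of-products facts is $2$-ended exactly when one factor is $2$-ended and the rest are finite; this is case (i). If $\Gamma$ is not complete, then $\Gamma_2$ is non-empty and, by construction, has no universal vertex, so it has at least two vertices and contains two non-adjacent ones; hence $G_{\Gamma_2}$ contains a free product of two non-trivial groups and is infinite. Since $G_\Gamma=G_{\Gamma_1}\times G_{\Gamma_2}$ is $2$-ended and $G_{\Gamma_2}$ is infinite, $G_{\Gamma_1}$ must be finite (two infinite factors would force one end), so $\Gamma_1$ is a clique with finite vertex groups and $G_\Gamma$ is quasi-isometric to $G_{\Gamma_2}$; thus $G_{\Gamma_2}$ is itself $2$-ended. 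The task reduces to the claim that if $\Gamma_2$ has no universal vertex and $G_{\Gamma_2}$ is $2$-ended, then $\Gamma_2$ consists of exactly two non-adjacent vertices each labelled $\mathbb{Z}_2$.

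To prove the claim I first pin down the vertex groups. Every vertex of $\Gamma_2$ is non-universal, hence non-adjacent to some vertex, so that pair generates a free product $\varphi(u)*\varphi(w)$; as $G_{\Gamma_2}$ contains no $F_2$, this forces $\varphi(u)\cong\varphi(w)\cong\mathbb{Z}_2$. Thus every vertex group is $\mathbb{Z}_2$ and $G_{\Gamma_2}$ is a right-angled Coxeter group. It remains to show $\Gamma_2$ has exactly two vertices. If $\Gamma_2$ were disconnected with at least three vertices, $G_{\Gamma_2}$ would be a non-trivial free product other than $\mathbb{Z}_2*\mathbb{Z}_2$, hence infinitely-ended. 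If $\Gamma_2$ were connected with at least three vertices and no universal vertex, its diameter is at least two, so it contains an induced path $a-b-c$ with $a\not\sim c$; since $b$ is non-universal there is a vertex $d$ with $b\not\sim d$. If $d$ is adjacent to both $a$ and $c$, then $\{a,b,c,d\}$ induces a $4$-cycle and $G_{\Gamma_2}$ contains $D_\infty\times D_\infty\supseteq\mathbb{Z}^2$; otherwise $d$ is non-adjacent to, say, $a$, and $\{a,b,d\}$ induces an edge together with an isolated vertex, so the corresponding special subgroup is $(\mathbb{Z}_2\times\mathbb{Z}_2)*\mathbb{Z}_2\supseteq F_2$. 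Either way $G_{\Gamma_2}$ contains $\mathbb{Z}^2$ or $F_2$, contradicting that a $2$-ended group is virtually $\mathbb{Z}$. Hence $\Gamma_2$ has exactly two, necessarily non-adjacent, vertices, which is case (ii).

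The hard part is exactly this last combinatorial step: ruling out $2$-endedness for a graph product over a graph with no universal vertex and at least three vertices. The argument relies on the standard fact that the special (parabolic) subgroup generated by a set of vertices is the graph product of the induced subgraph, which lets me extract a $\mathbb{Z}^2$ or an $F_2$ from a small induced pattern (an induced $C_4$, or an edge plus an isolated vertex). The dichotomy between these two obstructions mirrors the dichotomy between the non-hyperbolic and the infinitely-ended situations, and is precisely what forbids $G_{\Gamma_2}$ from being virtually cyclic.
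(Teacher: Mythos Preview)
Your argument is correct and follows the same overall strategy as the paper: reduce via the direct-product splitting $G_\Gamma=G_{\Gamma_1}\times G_{\Gamma_2}$, force any non-adjacent pair of vertices to be labelled $\mathbb{Z}_2$ via the free-product obstruction (Lemma~\ref{freeProd}), and then eliminate a possible third vertex. The only difference is in that last step. You run a connected/disconnected case split on $\Gamma_2$ and, in the connected case, extract either an induced $4$-cycle (giving $\mathbb{Z}^2$) or an edge-plus-isolated-vertex (giving $F_2$). The paper instead fixes one non-adjacent pair $v,w$ and shows directly that every other vertex $u$ must be adjacent to both, since otherwise $G_v*G_{\{u,w\}}$ is infinitely ended regardless of whether $u\sim w$; hence $G_\Gamma\cong(\mathbb{Z}_2*\mathbb{Z}_2)\times H$ with $H$ the special subgroup on $V\setminus\{v,w\}$, and $H$ must be finite (two infinite direct factors would give one end), forcing $V\setminus\{v,w\}=\Gamma_1$. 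So the paper never needs the $\mathbb{Z}^2$ obstruction, but your route is equally valid and makes the underlying graph combinatorics more explicit.
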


We immediately obtain the following description of number of ends of graph products of finite groups.
\begin{NewCorollaryC}
Let $G_\Gamma$ be a graph product of non-trivial finite groups. Decompose $\Gamma$ as a join $\Gamma_1*\Gamma_2$ where $\Gamma_1$ is a subgraph generated by the vertices which are adjacent to all the vertices of $\Gamma$.
\begin{enumerate}
\item[(i)] $e(G_\Gamma)=0$ if and only if the graph $\Gamma$ is complete.
\item[(ii)] $e(G_\Gamma)=1$ if and only if $\Gamma$ is connected, not complete and has no complete separating subgraph.
\item [(iii)] $e(G_\Gamma)=2$ if and only if  $\Gamma_2$ is isomorphic to $(\left\{\mathbb{Z}_2, \mathbb{Z}_2\right\},\emptyset)$. 
\item [(iv)] $e(G_\Gamma)=\infty$ if and only if $\Gamma$ has a complete separating subgraph $\Delta$ and $\Gamma_2$ is not isomorphic to $(\left\{\mathbb{Z}_2, \mathbb{Z}_2\right\},\emptyset)$.
\end{enumerate}
\end{NewCorollaryC}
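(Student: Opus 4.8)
The plan is to deduce all four statements from Theorem A, Proposition B, and the trichotomy $e(G_\Gamma)\in\{0,1,2,\infty\}$ recalled above, exploiting that for \emph{finite} vertex groups several hypotheses in those results become vacuous or automatic. Throughout I would use two elementary facts about the join decomposition $\Gamma=\Gamma_1*\Gamma_2$: first, that $\Gamma_1$ is automatically complete (any two of its vertices are adjacent to all vertices, hence to one another), so both the completeness of $\Gamma_1$ and the finiteness of its vertex groups are automatic in the present setting; and second, that $\Gamma$ is complete precisely when $\Gamma_2$ is empty.

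For (i) I would argue directly that $e(G_\Gamma)=0$ iff $G_\Gamma$ is finite. When $\Gamma$ is complete, $G_\Gamma$ is the direct product of its (finite) vertex groups and hence finite; when $\Gamma$ is not complete, two non-adjacent vertices carry vertex groups whose free product $A*B$ with $|A|,|B|\geq 2$ embeds in $G_\Gamma$, so $G_\Gamma$ is infinite. Thus $e(G_\Gamma)=0\Leftrightarrow\Gamma$ complete.

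The engine for the remaining parts is the specialization of Theorem A to finite vertex groups. Since a finite group has $0$ ends, condition (i) of Theorem A never applies, so $G_\Gamma$ has more than one end iff condition (ii) holds. As in the remark following Theorem A, for finite vertex groups the amalgamated splitting over a finite subgroup is equivalent to $\Gamma$ possessing a complete separating subgraph: a separating clique $\Delta$ on a vertex set $X$ partitions $V-X$ into (at least two) unions of components $A,B$ with no edges between them, giving $G_\Gamma\cong G_{\Delta_1}*_{G_\Delta}G_{\Delta_3}$ with $\Delta_1,\Delta_3$ the full subgraphs on $X\cup A$ and $X\cup B$ and $G_\Delta$ finite; conversely, such a splitting forces $\Gamma_1\cap\Gamma_2$ to be a complete graph separating $\Gamma$. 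Here I would record that the empty graph counts as a complete separating subgraph exactly when $\Gamma$ is disconnected. Granting this, $e(G_\Gamma)>1$ iff $\Gamma$ has a complete separating subgraph, and part (ii) follows: $e(G_\Gamma)=1$ iff $G_\Gamma$ is infinite ($\Gamma$ not complete) and one-ended (no complete separating subgraph); since a disconnected $\Gamma$ supplies the empty separating clique, this also forces $\Gamma$ to be connected.

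For (iii) I would specialize Proposition B: condition (i) there requires a $2$-ended vertex group, which is excluded for finite vertex groups, so $G_\Gamma$ is $2$-ended iff condition (ii) holds, i.e. $\Gamma_1$ is complete with finite vertex groups and $\Gamma_2\cong(\{\mathbb{Z}_2,\mathbb{Z}_2\},\emptyset)$. As noted, the conditions on $\Gamma_1$ are automatic, so this collapses to $\Gamma_2\cong(\{\mathbb{Z}_2,\mathbb{Z}_2\},\emptyset)$. Finally (iv) is obtained by elimination through the trichotomy: $e(G_\Gamma)=\infty$ iff $e(G_\Gamma)\notin\{0,1,2\}$, i.e. iff $G_\Gamma$ has more than one end (a complete separating subgraph exists) while failing to be $2$-ended ($\Gamma_2\not\cong(\{\mathbb{Z}_2,\mathbb{Z}_2\},\emptyset)$). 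The step requiring most care is the translation in the previous paragraph between Theorem A's algebraic splitting and the purely combinatorial ``complete separating subgraph'' condition, together with checking that the four cases are mutually exclusive and exhaustive — in particular that the $e=2$ case also has a complete separating subgraph (namely $\Gamma_1$), so that it is distinguished from case (iv) solely by the isomorphism type of $\Gamma_2$.
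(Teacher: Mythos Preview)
Your proposal is correct and follows essentially the same approach as the paper: part (i) via the finiteness criterion (Proposition~\ref{finite}), part (iii) via Proposition~B, and parts (ii) and (iv) by combining Theorem~A (specialized to finite vertex groups, where its condition (i) is vacuous and condition (ii) becomes the complete-separating-subgraph criterion) with the four-value classification $e(G_\Gamma)\in\{0,1,2,\infty\}$. You supply more detail than the paper on the translation between Theorem~A's amalgam splitting and the combinatorial separating-clique condition, and on why the hypotheses on $\Gamma_1$ in Proposition~B are automatic, but the logical skeleton is the same.
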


The number of ends of arbitrary Coxeter groups were characterized in terms of the defining graph in \cite[Cor.16, Cor.17]{MT} and in \cite[Chapter 8.7]{Davis}.  These results were the starting point for our investigations on number of ends of graph products of groups. Our proofs of Theorem A and Proposition B involve a structure theorem of groups with more than one end by Bergman \cite{Bergman} and Stallings \cite{St} and Bass-Serre theory of group actions on simplicial trees \cite{Serre}.

\subsection*{Acknowledgment}
The author thanks the referee for careful reading of the manuscript and for giving many constructive comments which helped improving the quality of the manuscript. The author would like also to thank Anthony Genevois for his comments concerning Theorem A which helped to prove this theorem in more general context.

\section{Graph products of groups}
We start by reviewing the concept of simplicial graphs. A {\it simplicial graph} $\Gamma=(V,E)$ consists of a set $V$ and a set $E$ of $2$-element subsets of $V$. The elements of $V$ are called {\it vertices} and the elements of $E$ are its {\it edges}.  If $V'\subseteq V$ and $E'\subseteq E$, then $\Gamma'=(V', E')$ is called a {\it subgraph} of $\Gamma$. If $\Gamma'$ is a subgraph of $\Gamma$ and $E'$ contains all the edges $\left\{v, w\right\}\in E$ with $v, w\in V'$, then $\Gamma'$ is called a {\it full} or \emph{induced} subgraph of $\Gamma$. A \emph{path} is a graph $P=(V,E)$ of the form 
\[
V=\left\{v_0,\ldots,v_n\right\}\text{ and }E=\left\{\left\{v_0,v_1\right\},\left\{v_1,v_2\right\},\ldots ,\left\{v_{n-1},v_n\right\}\right\}
\]
 where $v_i$ are all distinct.  A graph $\Gamma=(V,E)$ is called \emph{connected} if any two vertices $v,w\in V$ are contained in a subgraph $\Gamma'$ of $\Gamma$ such that $\Gamma'$ is a path. A maximal connected subgraph of $\Gamma$ is called a \emph{connected component of $\Gamma$}. A graph is called \emph{complete} if there is an edge for every pair of distinct vertices. A non-empty simplicial graph $\Gamma$ is called a \emph{tree} if $\Gamma$ is connected and has no induced cycle of length $\geq 3$.  A graph $\Gamma=(V,E)$ has a  \emph{separating subgraph} $\Delta=(X,Y)$ if the induced subgraph of $\Gamma$ with the vertex set $V-X$ is not connected. If $\Gamma$ is disconnected, then the empty graph is a separating subgraph. 

As the main objects in this article are graph products of groups, we
proceed with the definition of these groups, and establish some notation to be used throughout. Given a simplicial graph $\Gamma=(V,E)$, a \emph{vertex-labelling} of $\Gamma$ is a map $\varphi:V \rightarrow\left\{\text{non-trivial finitely generated groups}\right\}$. For $v\in V$ we denote by $G_v$ the image of $v$ under $\varphi$. 
\begin{definition}
Let $\Gamma=(V, E)$ be a finite simplicial vertex-labelled graph. The \emph{graph product of groups} $G_\Gamma$ is defined as the quotient
\[
\left( \underset{v\in V}{\ast} G_v \right) / \langle \langle [G_u,G_w]=1, \{u,w\} \in E \rangle \rangle.
\]
\end{definition}
Graph products of infinite cyclic vertex groups were introduced by Baudisch in \cite{B} and of arbitrary vertex groups by Green in \cite{G}.
Two cases to keep in mind are that the graph product $G_\Gamma$ is the free product of the vertex groups if $\Gamma$ does not contain any edge, and it is the direct product of the vertex groups if $\Gamma$ is a complete graph. 

Let $\mathcal{P}$ be an interesting property of groups. We are interested in understanding under which combinatorial conditions on the graph $\Gamma$ the group $G_\Gamma$ has property $\mathcal{P}$. A basic algebraic property of groups is finiteness. Finiteness of $G_\Gamma$ can be described easily in terms of $\Gamma$. Before we prove a characterization of finiteness of $G_\Gamma$ let us mentioned two important properties of graph products of groups  which will be needed several times in this article.

\begin{lemma}(\cite[Lemma 3.20]{G})
\label{fullsubgroup}
Let $\Gamma=(V,E)$ be a finite simplicial vertex-labelled graph and $\Delta$ be a subgraph. 
\begin{enumerate}
\item[(i)] If $\Delta$ is a full subgraph, then the subgroup of $G_\Gamma$ generated by $G_v, v\in \Delta$ is canonically isomorphic to the graph product $G_\Delta$ and is called a \emph{special subgroup} of $G_\Gamma$. 
In particular, each vertex group $G_v, v\in V$ is a special subgroup.
\item[(ii)] If $\Gamma$ has full subgraphs $\Gamma_1, \Gamma_2$ with $\Gamma=\Gamma_1\cup\Gamma_2$, then $G_\Gamma$ is an amalgamated product $G_{\Gamma_1}*_{G_{\Gamma_1\cap\Gamma_2}}G_{\Gamma_2}$.
\end{enumerate}
\end{lemma}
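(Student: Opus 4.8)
The plan is to prove part (i) by exhibiting a retraction, and then to deduce part (ii) from it by comparing group presentations. For part (i), let $\iota\colon G_\Delta\to G_\Gamma$ be the homomorphism determined on each vertex group $G_v$ ($v\in\Delta$) by the inclusion. This is well defined because every defining relation of $G_\Delta$ — the internal relations of the $G_v$ together with the commutation relations $[G_u,G_w]=1$ for edges $\{u,w\}$ of $\Delta$ — is also a defining relation of $G_\Gamma$, since $\{u,w\}\in E(\Delta)\subseteq E(\Gamma)$. Its image is precisely the subgroup generated by the $G_v$, $v\in\Delta$, so it remains to show that $\iota$ is injective. I would establish this by constructing a left inverse: define $r\colon G_\Gamma\to G_\Delta$ on vertex groups by $r|_{G_v}=\mathrm{id}$ for $v\in\Delta$ and $r|_{G_v}\equiv 1$ for $v\notin\Delta$.

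The crucial step — and the place where fullness of $\Delta$ enters — is to verify that $r$ respects the defining relations of $G_\Gamma$. The internal relations of each $G_v$ are clearly preserved. For a commutation relation arising from an edge $\{u,w\}\in E(\Gamma)$, there are two cases: if $u\notin\Delta$ or $w\notin\Delta$, then one factor maps to $1$ and the relation maps to a trivial identity; if instead both $u,w\in\Delta$, then, because $\Delta$ is full, we have $\{u,w\}\in E(\Delta)$, so $[G_u,G_w]=1$ already holds in $G_\Delta$. Hence $r$ is a well-defined homomorphism with $r\circ\iota=\mathrm{id}_{G_\Delta}$, which forces $\iota$ to be injective, so $G_\Delta$ is canonically isomorphic to the special subgroup it generates. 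This case analysis also isolates the main obstacle and shows that fullness is not a technicality: without it there could be an edge $\{u,w\}$ of $\Gamma$ with $u,w\in\Delta$ but $\{u,w\}\notin E(\Delta)$, and then $r$ would fail to respect that relation.

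For part (ii), I would first observe that $\Gamma_1\cap\Gamma_2$ is again a full subgraph of $\Gamma$ and of each $\Gamma_i$: any edge of $\Gamma$ with both endpoints in $V(\Gamma_1)\cap V(\Gamma_2)$ lies in both $\Gamma_1$ and $\Gamma_2$ by their fullness, hence in the intersection. By part (i) the inclusions then induce embeddings $G_{\Gamma_1\cap\Gamma_2}\hookrightarrow G_{\Gamma_1}$ and $G_{\Gamma_1\cap\Gamma_2}\hookrightarrow G_{\Gamma_2}$, so the amalgamated product $G_{\Gamma_1}*_{G_{\Gamma_1\cap\Gamma_2}}G_{\Gamma_2}$ is well defined. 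The remaining work is a comparison of presentations. Since $\Gamma=\Gamma_1\cup\Gamma_2$ with the $\Gamma_i$ full, one has $V=V(\Gamma_1)\cup V(\Gamma_2)$ and $E=E(\Gamma_1)\cup E(\Gamma_2)$. Writing down the standard presentation of the amalgam — the generators and internal relations of the $G_v$ for $v\in V(\Gamma_1)\cup V(\Gamma_2)$, the commutation relations indexed by $E(\Gamma_1)\cup E(\Gamma_2)$, and the identification of the two copies of each $G_v$ with $v\in V(\Gamma_1)\cap V(\Gamma_2)$ — I would check that, after performing the identification, it becomes verbatim the defining presentation of $G_\Gamma$. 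This gives the desired isomorphism $G_\Gamma\cong G_{\Gamma_1}*_{G_{\Gamma_1\cap\Gamma_2}}G_{\Gamma_2}$, and the bookkeeping here is routine once part (i) guarantees the edge maps are injective.
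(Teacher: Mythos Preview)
Your argument is correct and is essentially the standard proof of this fact. Note, however, that the paper does not give its own proof of this lemma: it is stated with a citation to Green's thesis \cite[Lemma 3.20]{G} and used as a black box. Your retraction argument for part~(i) and the presentation comparison for part~(ii) are precisely the approach one finds in the literature, so there is nothing to contrast here beyond observing that you have supplied what the paper merely quotes.
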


We immediately obtain an answer to the question under which condition on the graph $\Gamma$ the graph product of groups $G_\Gamma$ is finite.
\begin{proposition}
\label{finite}
Let $\Gamma=(V,E)$ be a finite simplicial vertex-labelled graph and $G_\Gamma$ be the corresponding graph product of groups. The group $G_\Gamma$ is finite if and only if $\Gamma$ is complete and $G_v$ is finite for all $v\in V$. 
\end{proposition}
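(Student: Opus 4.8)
The plan is to prove the two implications separately, with the backward direction being essentially immediate and the forward direction requiring a small amount of work.

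For the backward direction, suppose $\Gamma$ is complete and every vertex group $G_v$ is finite. Since $\Gamma$ is complete, every pair of distinct vertices is joined by an edge, so the defining relations force all vertex groups to commute pairwise; as already observed after the definition, this means $G_\Gamma$ is the direct product $\prod_{v\in V}G_v$ of its vertex groups. Because $V$ is finite and each $G_v$ is finite, this is a finite direct product of finite groups and hence finite.

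For the forward direction, assume $G_\Gamma$ is finite. First I would deduce that every vertex group is finite: by Lemma \ref{fullsubgroup}(i) each $G_v$ is a special subgroup of $G_\Gamma$, and a subgroup of a finite group is finite. It then remains to show $\Gamma$ is complete, which I would establish by contraposition. Suppose $\Gamma$ is \emph{not} complete; then there exist two distinct, non-adjacent vertices $u,w$. The full subgraph $\Delta$ on the vertex set $\{u,w\}$ contains no edge, so by Lemma \ref{fullsubgroup}(i) the associated special subgroup of $G_\Gamma$ is isomorphic to the graph product $G_\Delta$, which in the edgeless case is the free product $G_u * G_w$.

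The key point, and the only genuinely nontrivial ingredient, is that the free product of two non-trivial groups is infinite. Since the vertex groups are non-trivial by hypothesis, I would pick $a\in G_u\setminus\{1\}$ and $b\in G_w\setminus\{1\}$ and argue via the normal form theorem for free products that the reduced words $(ab)^n$ for $n\geq 1$ are pairwise distinct, so $ab$ has infinite order and $G_u * G_w$ is infinite. This exhibits an infinite subgroup of $G_\Gamma$, contradicting finiteness of $G_\Gamma$. Hence $\Gamma$ must be complete, which completes the proof. The main obstacle is precisely this infiniteness of $G_u * G_w$; everything else reduces to the structural facts recorded in Lemma \ref{fullsubgroup} and the observation that complete graphs yield direct products.
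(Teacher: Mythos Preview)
Your proof is correct and follows essentially the same approach as the paper: the backward direction uses that a complete graph yields a direct product, and the forward direction uses Lemma~\ref{fullsubgroup}(i) to embed both each vertex group and the free product $G_u * G_w$ for non-adjacent $u,w$ as subgroups of $G_\Gamma$. The only difference is that you spell out why $G_u * G_w$ is infinite via the normal form theorem, whereas the paper takes this for granted.
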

\begin{proof}
If $\Gamma$ is complete and $G_v$ is finite for all $v\in V$, then $G_\Gamma$ is a direct product of  finite groups and is therefore finite. For the other direction let us assume that $\Gamma$ is not complete or at least one vertex group is infinite. If $\Gamma$ is not complete then there exist two vertices $v$ and $w$ which are not connected by an edge. The group $G_v*G_w$ is by Lemma \ref{fullsubgroup} a subgroup of $G_\Gamma$, hence $G_\Gamma$ is infinite. If $\Gamma$ has at least one infinite vertex group $G_x$, then the group $G_x$ is by Lemma \ref{fullsubgroup} a subgroup of $G_\Gamma$, thus $G_\Gamma$ is again infinite.
\end{proof}

\section{Quasi-isometric invariants}
In geometric group theory there is a canonical way to consider an abstract group as a metric space, namely by replacing the group with its Cayley graph.  
Let $G$ be a finitely generated group and $S\subseteq G-\left\{1\right\}$ be a finite generating set of $G$. The \emph{Cayley graph for $G$ with respect to $S$}, denoted by ${\rm Cay}(G,S)$, is a directed edge-labbeled graph given as follows: the vertex set $V=G$, the edge set $E=\left\{(g,gs)\mid g\in G, s\in S\right\}$, where for a directed edge $(g,gs)$ the vertex $g$ is the initial vertex and $gs$ is the terminal vertex. The edge-labelling is given as follows $\varphi: E\rightarrow S$, $(g,gs)\mapsto s$. The group $G$ acts on ${\rm Cay}(G,S)$ via left-multiplication and one can show that this map is an isomorphism between $G$ and the automorphism group of ${\rm Cay}(G,S)$. We consider ${\rm Cay}(G,S)$ as a metric space with the path metric. Clearly, the Cayley graph ${\rm Cay}(G,S)$ depends on the choice of the generating set of the group. For example ${\rm Cay}(\mathbb{Z}_5,\left\{1\right\})$ and ${\rm Cay}(\mathbb{Z}_5,\left\{1,2,3,4\right\})$ are very different from the graph theoretical point of view, but these graphs are quasi-isometric.

\begin{definition}
Let $(X, d_X)$ and $(Y,d_Y)$ be metric spaces. 
\begin{enumerate}
\item[(i)] A map $f: X\rightarrow Y$ is called a \emph{quasi-isometric embedding} if there exist $K,C\in\mathbb{R}, K\geq 0, C\geq 0$ such that
\[
\frac{1}{K}d_X(x_1,x_2)-C\leq d_Y(f(x_1),f(x_2))\leq Kd_X(x_1, x_2)+C
\]
for all $x_1, x_2\in X$.
\item[(ii)] A quasi-isometric embedding $f:X\rightarrow Y$ is called a \emph{quasi-isometry}, if there exists a constant $D>0$ such that for every $y\in Y$ there exist an $x\in X$ so that $d_Y(f(x), y)\leq D$.
\item[(iii)] The metric spaces $(X, d_X)$ and $(Y,d_Y)$ are \emph{quasi-isometric} if there exists a quasi-isometry $f:X\rightarrow Y$.
\end{enumerate}
\end{definition}

Given a finitely generated group $G$ and two finite generating sets $S_1$ and $S_2$, the Cayley graphs ${\rm Cay}(G,S_1)$ and ${\rm Cay}(G,S_2)$ are quasi-isometric, see \cite[I 8.17]{BH}. Two finitely generated groups $G$ and $H$ are called \emph{quasi-isometric} if their Cayley graphs are quasi-isometric. One important fact is that every finite index subgroup $H$ of a finitely generated group $G$ is quasi-isometric to $G$. 

A big goal in geometric group theory, which was initiated by Gromov, is a classification of finitely generated groups up to quasi-isometry. Quasi-isometric invariants allow us to distinguish between quasi-isometry classes of groups. A property $\mathcal{P}$ of a group is called a \emph{quasi-isometric invariant} if for every two finitely generated groups $G_1$ and $G_2$ which are quasi-isometric, $G_1$ has property $\mathcal{P}$ if and only if $G_2$ has property $\mathcal{P}$. It is obvious, that to be finite is a quasi-isometric invariant.  Further examples of quasi-isometric invariants include Gromov hyperbolicity, virtual freeness and the number of ends.  

\begin{definition}
Let $G$ be a finitely generated group, $S$ be a finite generating set of $G$ and  ${\rm Cay}(G, S)$ be the corresponding Cayley graph. The \emph{number of ends}, denoted by $e(G)$, is defined as follows: 
\[
e(G):=\sup\left\{ \#\pi^{u}_0({\rm Cay}(G, S)-C)\mid C\subseteq{\rm Cay}(G, S)\text{ finite subgraph}\right\},
\]
where  $\#\pi^{u}_0({\rm Cay}(G, S)-C)$ is the number of unbounded connected components of ${\rm Cay}(G, S)-C$.
\end{definition}

The number of ends does not depend on a finite generating set of $G$. Furthermore the number of ends is a quasi-isometric invariant \cite[Part I Proposition 8.29]{BH}. For example, $e(\mathbb{Z})=2$ and $e(F_n)=\infty$ where $F_n$ is a free group of rank $n\geq 2$.

A proof of the following theorem can be found for instance in \cite[Part I Theorem 8.32]{BH}, \cite{Bergman}, \cite{Hopf}, \cite{St}.
\begin{theorem}
\label{NumberOfEnds}
Let $G$ be a finitely generated group.
\begin{enumerate}
\item[(i)] The group $G$ has $0, 1, 2$ or $\infty$ many ends.
\item[(ii)] The group $G$ has $0$ ends if and only if it is finite.
\item[(iii)] The group $G$ has $2$ ends if and only if it is virtually $\mathbb{Z}$ (i.e. $G$ has a subgroup of finite index isomorphic to $\mathbb{Z}$).
\item[(iv)] The group $G$ has more than $1$ end if and only if $G$ is a non-trivial amalgamated product $A*_C B$ or ${\rm HNN}$-extension $A*_C$ with $C$ finite.
\item[(v)] The group $G$ has infinitely many ends if and only if $G$ is a non-trivial amalgamated product $A*_C B$ or ${\rm HNN}$-extension $A*_C$ with $C$ finite and $|A:C|\geq 3$,  $|B:C|\geq 2$.
\item[(vi)] Suppose $G=A*_C B$ where $A$ and $B$ are $1$-ended and $C$ is infinite and finitely generated, then $G$ is $1$-ended.
\end{enumerate}
\end{theorem}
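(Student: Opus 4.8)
The plan is to separate the elementary assertions, which fall straight out of the definition of ends, from the deep structural ones, which rest on Stallings' theorem on groups that split over a finite subgroup, combined with Bass--Serre theory.

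I would begin with (i) and (ii). Part (ii) is immediate: if $G$ is finite then $\mathrm{Cay}(G,S)$ is a finite graph, so taking $C=\mathrm{Cay}(G,S)$ leaves no unbounded component and $e(G)=0$; conversely, an infinite, connected, locally finite graph has at least one unbounded complementary component for every finite $C$, so $e(G)\geq 1$. For (i) I would run the Freudenthal--Hopf cut-and-paste argument. The group $G$ acts on $\mathrm{Cay}(G,S)$ by isometries and hence permutes its ends. If $e(G)$ were finite but at least $3$, I would pick a finite connected set $C$ whose complement has exactly $n=e(G)\geq 3$ unbounded components, translate $C$ by some $g$ so that $gC$ lies deep inside one of these components, and count the unbounded components of the complement of $C\cup gC$; two disjoint separating sets, each cutting off $n$ pieces, produce strictly more than $n$ unbounded components, contradicting the maximality of $n$. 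Hence $e(G)\in\{0,1,2,\infty\}$.

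Next I would treat the implications that \emph{produce} ends from a splitting, covering the ``if'' directions of (iii), (iv) and (v) at once. Suppose $G=A\ast_C B$ or $G=A\ast_C$ with $C$ finite. By Bass--Serre theory $G$ acts on a simplicial tree $T$ with no global fixed point, with vertex stabilizers conjugate to the factors and edge stabilizers conjugate to $C$. Since $C$ is finite the edge stabilizers are finite, and one checks that $\mathrm{Cay}(G,S)$ is quasi-isometric to a tree of spaces over $T$ whose ends are governed by the ends of $T$. For an amalgam $T$ is biregular of degrees $|A:C|$ and $|B:C|$: when $|A:C|=|B:C|=2$ it is a line and $G$ has exactly two ends, whereas if $|A:C|\geq 3$ and $|B:C|\geq 2$ (or in the analogous HNN case) $T$ is infinitely-ended and $e(G)=\infty$. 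This yields the ``if'' direction of (iv), the forward implication of (v), and the easy direction of (iii), since $\mathbb{Z}$ has two ends and the number of ends is a quasi-isometric invariant preserved under passage to finite-index subgroups.

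The heart of the matter, and the step I expect to be the main obstacle, is the converse in (iv): that having more than one end forces a splitting over a finite subgroup. This is Stallings' ends theorem, whose proof (via almost-invariant sets, Dunwoody's tracks, or Bergman's argument) is genuinely deep, and I would invoke it rather than reprove it --- this is precisely why the statement is quoted from the literature. Granting Stallings, (v) follows by feeding the splitting into the Bass--Serre end count above and observing that only the degenerate line case gives finitely many ends, and the hard direction of (iii) follows because a two-ended group then splits with a line as Bass--Serre tree, so that $G$ maps to the isometry group of a line with finite kernel and virtually-$\mathbb{Z}$ image. For (vi) I would show that an infinite edge group kills the extra ends: with $A,B$ one-ended and $C$ infinite and finitely generated, $G$ again acts on the Bass--Serre tree, but now adjacent vertex spaces are glued along unbounded copies of $C$; a path-connectivity (or Mayer--Vietoris) argument on the tree of spaces shows that the unbounded parts of all vertex spaces are linked through these unbounded cosets of $C$, so no finite set disconnects $\mathrm{Cay}(G,S)$ at infinity and $e(G)=1$. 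The one delicate point there is verifying that it is the \emph{infiniteness} of $C$, and not merely one-endedness of the vertex groups, that prevents a finite separating set.
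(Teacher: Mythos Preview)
The paper does not prove this theorem at all: it is stated as a quoted result, with the sentence ``A proof of the following theorem can be found for instance in \cite{BH}, \cite{Bergman}, \cite{Hopf}, \cite{St}'' immediately preceding it, and no argument given. So there is no in-paper proof to compare your proposal against.

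That said, your sketch is a faithful outline of the standard arguments in those references, and you have correctly identified the division of labour: (i) and (ii) are elementary (the Freudenthal--Hopf translate argument), the ``if'' directions of (iii)--(v) follow from reading off the ends of the Bass--Serre tree when edge groups are finite, and the ``only if'' direction of (iv) is Stallings' theorem, which you are right to invoke rather than reprove. Your treatment of (vi) via a tree-of-spaces connectivity argument is also the usual route. In short, your proposal supplies what the paper deliberately outsources to the literature, and does so along the expected lines.
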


In particular, it follows that the number of ends of the free product of two infinite groups $H$ and $K$ is $\infty$ and it is not hard to prove that the number of ends of the direct product of two infinite groups $H$ and $K$ is $1$. Let us mention that using Theorem \ref{NumberOfEnds} we can not compute the number of ends of the graph product which is defined via the graph $\Gamma_5$. 
\begin{tiny}
\begin{center}
\begin{tikzpicture}
\draw (18:2cm) -- (90:2cm) -- (162:2cm) -- (234:2cm) --
(306:2cm) -- cycle;

\foreach \x in {18,90,162,234,306}{
\draw[fill=black] (\x:2cm) circle (2pt);
}
\node at (16:2.3cm) {\tiny{$\mathbb{Z}_3$}};
\node at (88:2.3cm) {\tiny{$\mathbb{Z}_2$}};
\node at (164:2.3cm) {\tiny{$\mathbb{Z}_6$}};
\node at (236:2.3cm) {\tiny{$\mathbb{Z}_5$}};
\node at (304:2.3cm) {\tiny{$\mathbb{Z}_4$}};

\node at (-2,2.2) {\normalsize{$\Gamma_5$}};
\end{tikzpicture}
\end{center}
\end{tiny}
The graph $\Gamma_5$ is connected, not complete and has no complete separating subgraph, thus the group $G_{\Gamma_5}$ has one end by Corollary C. Let $\Gamma_6$ be a hexagon with vertices $v_1, \ldots, v_6$. For $i$ even, let $G_{v_i}$ be finite and for $i$ odd, let $G_{v_i}$ be infinite ended. Then again Theorem 3.3 doesn't help, but using Theorem A we obtain $e(G_{\Gamma_6})=1$.

\begin{lemma}
\label{freeProd}
Let $G$ and $H$ be finite groups. The free product $G*H$ has $2$ ends if and only if $G\cong H\cong\mathbb{Z}_2$.
\end{lemma}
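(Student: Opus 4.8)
The plan is to apply the structure theorem for ends, Theorem \ref{NumberOfEnds}, to the free product viewed as an amalgam over the trivial group. First I would observe that $G*H = G *_{\{1\}} H$ is a non-trivial amalgamated product whose edge group $C=\{1\}$ is finite, and that both factors are non-trivial since the vertex groups are assumed non-trivial. By part (iv) of Theorem \ref{NumberOfEnds} this already forces $e(G*H)>1$, so $G*H$ is either $2$-ended or infinitely-ended; the whole question is to decide which.

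For the forward direction, suppose $e(G*H)=2$, so that $G*H$ is \emph{not} infinitely-ended. After ordering the factors so that $|G|\geq |H|$ (both are at least $2$, being non-trivial finite groups), I would argue that $|G|=2$: if instead $|G|\geq 3$, then taking $A=G$, $B=H$ and $C=\{1\}$ we would have $|A:C|=|G|\geq 3$ and $|B:C|=|H|\geq 2$, whence part (v) of Theorem \ref{NumberOfEnds} yields infinitely many ends, contradicting $e(G*H)=2$. Therefore $|G|=2$, and since $2\leq |H|\leq |G|=2$ we also get $|H|=2$, i.e. $G\cong H\cong \mathbb{Z}_2$.

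For the converse I would identify $\mathbb{Z}_2*\mathbb{Z}_2$ with the infinite dihedral group and exhibit an infinite cyclic subgroup of finite index: if $a,b$ denote the two order-two generators, then $ab$ has infinite order and $\langle ab\rangle\cong\mathbb{Z}$ sits inside $\mathbb{Z}_2*\mathbb{Z}_2$ with index $2$. Hence $G*H$ is virtually $\mathbb{Z}$, and part (iii) of Theorem \ref{NumberOfEnds} gives $e(G*H)=2$. The argument is essentially bookkeeping on top of Theorem \ref{NumberOfEnds}; the only point needing care is recognizing that the trivial edge group is finite, so that parts (iv) and (v) apply with $C=\{1\}$, and matching the index conditions $|A:C|\geq 3$ and $|B:C|\geq 2$ correctly against the orders of $G$ and $H$. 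I do not expect any serious obstacle beyond this verification.
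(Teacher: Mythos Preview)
Your proof is correct. The converse direction matches the paper's argument (identify $\mathbb{Z}_2*\mathbb{Z}_2$ with the infinite dihedral group and note it is virtually $\mathbb{Z}$). For the forward direction you take a slightly different route: you invoke Theorem~\ref{NumberOfEnds}(v) directly, reading off that $G*_{\{1\}}H$ has infinitely many ends once the larger factor has order at least $3$. The paper instead argues that if $\#G\geq 3$ then the kernel of the natural surjection $\pi:G*H\to G\times H$ is free of rank $\geq 2$ (citing \cite[I \S 1 Prop.~4]{Serre}), and since this kernel has finite index it shares the number of ends with $G*H$, forcing $e(G*H)=\infty$. Your approach is a bit more economical in that it stays entirely within the structure theorem already quoted in the paper, while the paper's argument gives the extra information that $G*H$ is virtually free of rank $\geq 2$ in the non-dihedral case.
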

\begin{proof}
If $G\cong H\cong\mathbb{Z}_2$, then the free product $G*H$ is isomorphic to the infinite dihedral group $D_\infty$ which is virtually infinite cyclic. Hence $G*H$ is quasi-isometric to $\mathbb{Z}$ and $\mathbb{Z}$ has $2$ ends.

Assume for contradiction that $\#G\geq 3$. The kernel of the canonical map $\pi:G*H\rightarrow G\times H$ is a free group of rank $\geq 2$, see \cite[I \S 1 Prop. 4]{Serre}. We obtain $e(G*H)=e({\rm ker}(\pi))=\infty$, a contradiction. 
\end{proof}

A description of virtually infinite cyclic right-angled Coxeter groups in terms of the defining graph is proven in \cite[Cor.17]{MT} and in \cite[Chapter 8.7]{Davis}. Thanks to the referee we give here a characterization of virtually infinite cyclic graph products of arbitrary vertex groups.

\begin{NewPropositionB}
Let $G_\Gamma$ be a graph product. Decompose $\Gamma$ as a join $\Gamma_1*\Gamma_2$ where $\Gamma_1$ is a subgraph generated by the vertices which are adjacent to all the vertices of $\Gamma$. The graph product $G_\Gamma$ is $2$-ended if and only if either:
\begin{enumerate}
\item[(i)] The graph $\Gamma$ is complete with exactly one $2$-ended vertex group and all other vertex groups being finite, or
\item[(ii)] the subgraph $\Gamma_1$ is complete with finite vertex groups and $\Gamma_2=(\left\{\mathbb{Z}_2, \mathbb{Z}_2\right\},\emptyset)$.
\end{enumerate} 
\end{NewPropositionB}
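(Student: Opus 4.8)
The plan is to settle the easy direction by exhibiting a virtually infinite cyclic group in each case, and to prove the converse through the canonical join decomposition $\Gamma=\Gamma_1\ast\Gamma_2$, which gives $G_\Gamma\cong G_{\Gamma_1}\times G_{\Gamma_2}$ because every vertex of $\Gamma_1$ is adjacent to every other vertex of $\Gamma$. In particular $\Gamma_1$ is automatically complete, so the real content of (ii) is that its vertex groups are finite and $\Gamma_2=(\{\mathbb{Z}_2,\mathbb{Z}_2\},\emptyset)$. For $(\Leftarrow)$, in (i) we have $G_\Gamma=\prod_v G_v$ equal to a $2$-ended group times a finite group, and in (ii) we have $G_\Gamma=G_{\Gamma_1}\times(\mathbb{Z}_2\ast\mathbb{Z}_2)$ with $G_{\Gamma_1}$ finite by Proposition \ref{finite}; both are virtually $\mathbb{Z}$, hence $2$-ended by Theorem \ref{NumberOfEnds}(iii). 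Throughout the converse I would lean on the elementary fact that a subgroup of a $2$-ended group is finite or $2$-ended (intersect with the finite-index copy of $\mathbb{Z}$), so any finitely generated subgroup that is $1$-ended or has infinitely many ends is forbidden.

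For the converse, assume $G_\Gamma$ is $2$-ended and distinguish whether $\Gamma_2$ is empty. If $\Gamma_2=\emptyset$ then $\Gamma$ is complete and $G_\Gamma=\prod_v G_v$: not all $G_v$ are finite (else $G_\Gamma$ is finite by Proposition \ref{finite}), and no two $G_v$ are infinite (the direct product of two infinite groups is $1$-ended, a forbidden subgroup), so exactly one $G_{v_0}$ is infinite and the others finite; then $G_{v_0}$ has finite index in $G_\Gamma$ and is therefore $2$-ended, giving case (i).

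Now suppose $\Gamma_2\neq\emptyset$. Then $\Gamma_2$ has at least two vertices and no vertex dominating it (otherwise such a vertex would dominate $\Gamma$ and belong to $\Gamma_1$), so $\Gamma_2$ is not complete and $G_{\Gamma_2}$ is infinite. Since $G_\Gamma=G_{\Gamma_1}\times G_{\Gamma_2}$ is $2$-ended, $G_{\Gamma_1}$ must be finite (two infinite direct factors produce a $1$-ended subgroup), so by Proposition \ref{finite} the complete graph $\Gamma_1$ has finite vertex groups, and $G_{\Gamma_2}$ has finite index in $G_\Gamma$ and is $2$-ended. Writing $\Delta:=\Gamma_2$, it remains to show that a graph with at least two vertices, no dominating vertex, and $G_\Delta$ $2$-ended is $(\{\mathbb{Z}_2,\mathbb{Z}_2\},\emptyset)$. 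First, every vertex $v$ has a non-neighbour $v'$, so $G_v\ast G_{v'}$ is a special subgroup (Lemma \ref{fullsubgroup}); being infinite inside a $2$-ended group it is $2$-ended, whence $G_v\cong G_{v'}\cong\mathbb{Z}_2$ by Lemma \ref{freeProd} (Theorem \ref{NumberOfEnds}(v) first rules out $G_v$ infinite). Thus $G_\Delta$ is a right-angled Coxeter group.

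To finish I would prove $\Delta$ is disconnected. The main obstacle is excluding a connected $\Delta$, and here the key combinatorial claim is that a connected graph with no dominating vertex contains an induced $4$-cycle $C_4$ or an induced path $P_4$ on four vertices: a graph with neither is a $C_4$-free cograph, and analysing the join decomposition of a connected cograph forces either a dominating vertex or an induced $C_4$. An induced $C_4$ yields the special subgroup $(\mathbb{Z}_2\ast\mathbb{Z}_2)\times(\mathbb{Z}_2\ast\mathbb{Z}_2)\supseteq\mathbb{Z}^2$, a $1$-ended subgroup; an induced $P_4$ with vertices $a,b,c,d$ and edges $ab,bc,cd$ gives a special subgroup that splits over the finite group $\langle b\rangle\cong\mathbb{Z}_2$ as an amalgam with one factor of index $2$ and the other of infinite index, hence has infinitely many ends by Theorem \ref{NumberOfEnds}(v). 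Either outcome contradicts $2$-endedness, so $\Delta$ is disconnected, and then $G_\Delta$ is a free product of the graph products of its components; Theorem \ref{NumberOfEnds}(v) forces exactly two components, each a single $\mathbb{Z}_2$-vertex, so $\Delta=(\{\mathbb{Z}_2,\mathbb{Z}_2\},\emptyset)$. This is precisely where knowing in advance that all vertex groups of $\Delta$ equal $\mathbb{Z}_2$ is crucial, since it is what makes the amalgamated edge group in the $P_4$ case finite.
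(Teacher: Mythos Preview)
Your argument is correct, but the converse in the non-complete case takes a more circuitous route than the paper's. After fixing non-adjacent vertices $v,w$ (so $G_v\cong G_w\cong\mathbb{Z}_2$ by Lemma~\ref{freeProd}, exactly as you argue), the paper simply takes any third vertex $u$ and looks at the special subgroup on $\{u,v,w\}$: if $u$ were not adjacent to $v$, this subgroup would be $G_v\ast G_{\{u,w\}}$, a free product with one factor of order~$\geq 3$, hence infinite-ended---so $u$ is adjacent to $v$, and by symmetry to $w$. This three-vertex check immediately forces $\Gamma=(G_v\ast G_w)\times H$ with $H$ the graph product on $V\setminus\{v,w\}$, and $H$ must be finite.

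Your route instead first reduces to the right-angled Coxeter setting on $\Gamma_2$, then invokes the cograph structure theorem (a connected $P_4$-free graph is a join) to produce an induced $C_4$ or $P_4$, and handles each via a tailored special subgroup. This is valid and the $P_4$ splitting you describe is fine, but it imports an external combinatorial fact and a longer case analysis where the paper's three-vertex argument settles everything in two lines. The payoff of your approach is that it makes the obstruction graphs ($C_4$, $P_4$) explicit, which connects nicely to the hyperbolicity and virtual-freeness dictionary in the introduction; the paper's approach buys brevity and self-containment.
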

\begin{proof}
We first prove that any infinite subgroup of a virtually infinite cyclic group is virtually infinite cyclic.
Let $H$ be a virtually infinite cyclic group and $K\subseteq H$ an infinite subgroup. Every virtually infinite cyclic group contains a normal infinite cyclic group of finite index. Thus, the group $H$ has a normal subgroup $L$ of finite index with $L\cong\mathbb{Z}$. Let $f:K\rightarrow H/L$ be the quotient map. We obtain $K/{\rm ker}(f)\cong f(K)$. The image of $K$ under $f$ is a finite group and ${\rm ker}(f)$ is an infinite subgroup of $L$, hence ${\rm ker}(f)$ is isomorphic to $\mathbb{Z}$, this fact proves that $K$ is virtually infinite cyclic.

Clearly either (i) or (ii) imply $G_\Gamma$ is $2$-ended. More precisely, if $\Gamma$ is complete with a one $2$-ended vertex group $G_v$ and all other vertex groups being finite, then $G_\Gamma$ is a direct product of the vertex groups and $G_\Gamma$ is quasi-isometric to $G_v$. If the subgraph $\Gamma_1$ is complete with finite vertex groups and $\Gamma_2=(\left\{\mathbb{Z}_2, \mathbb{Z}_2\right\},\emptyset)$, then $G_\Gamma$ is isomorphic to $G_{\Gamma_1}\times (\mathbb{Z}_2*\mathbb{Z}_2)$. The group $G_{\Gamma_1}$ is a direct product of finite groups and is therefore finite. Hence, the group $G_\Gamma$ is quasi-isometric to $\mathbb{Z}_2*\mathbb{Z}_2$ and this group is virtually infinite cyclic.

To prove the other direction assume that (i) does not hold. We prove that (ii) does hold. We already know that all vertex groups of $\Gamma$ are finite or $2$-ended. If $\Gamma$ is complete and (i) does not hold, then at least two vertex groups are $2$-ended. By Lemma \ref{fullsubgroup}(i) the graph product $G_\Gamma$ contains the direct product of these two groups as a $1$-ended subgroup, which is impossible. Hence $\Gamma$ is not complete. We choose vertices $v, w$ which are not connected by an edge. Then the subgroup generated by $G_v$ and $G_w$ is isomorphic to $G_v*G_w$ and this subgroup has infinitely many ends unless $G_v\cong G_w\cong \mathbb{Z}_2$, see Lemma \ref{freeProd}. Thus $G_v\cong G_w\cong\mathbb{Z}_2$. Let $u$ be a vertex of $\Gamma-\left\{v,w\right\}$. If $u$ and $v$ are not connected by an edge, then the subgroup generated by $G_v, G_w, G_u$ is isomorphic to $G_v*G_{(\left\{u,w\right\},\emptyset)}$ or to $G_v*G_{(\left\{u,w\right\},\left\{\left\{u,w\right\}\right\})}$. Both groups are infinite ended, which is impossible. Instead there is an edge between $u$ and $v$. Similarly there is an edge between $u$ and $w$. Hence $G_\Gamma\cong(G_v*G_w)\times H$ where $H$ is the graph product of the full subgraph on $V-\left\{v,w\right\}$. Since $G_\Gamma$ is not $1$-ended, $H$ is finite. This finishes the proof.  
\end{proof}

\section{Trees of groups}
The aim of this section is to bring together two concepts: graph products of groups and trees of groups. The standard reference for trees of groups is \cite{Serre}.

\begin{definition}
Let $\Delta=(X,Y)$ be a finite tree with vertex set $X$ and edge set $Y$. Let $\Psi:X\rightarrow\left\{\text{groups}\right\}$ be a vertex-labelling of $\Delta$ and $\Phi: Y\rightarrow\left\{\text{groups}\right\}$ be an edge-labelling of $\Delta$. For every edge $\left\{x_1, x_2\right\}\in Y$ we have monomorphisms $f_{\left\{x_1, x_2\right\}, x_1}:\Phi(\left\{x_1, x_2\right\})\hookrightarrow\Psi(x_1)$ and $f_{\left\{x_1,x_2\right\}, x_2}:\Phi(\left\{x_1, x_2\right\})\hookrightarrow\Psi(x_2)$.

The \emph{tree of groups $\pi_1(\Delta)$} is defined as the quotient 
\[
\left( \underset{x\in X}{\ast} \Psi(x) \right) / \langle \langle f_{\left\{x_1, x_2\right\}, x_1}(a)=f_{\left\{x_1,x_2\right\},x_2}(a), a\in\Phi(\left\{x_1, x_2\right\}), \left\{x_1,x_2\right\}\in Y \rangle \rangle.
\]
\end{definition}
Let $G_\Gamma$ be a graph product. For our purpose, we will consider trees of groups where the vertex and the edge groups are special subgroups of $G_\Gamma$ and the monomorphisms from the edge-groups into the vertex groups are canonical embeddings.

\begin{example}
Let us observe the following two graphs.
\begin{figure}[h]
\begin{center}
\begin{tikzpicture}
\draw[fill=black]  (0,0) circle (2pt);
\node at (0,-0.3) {\tiny{$\mathbb{Z}_2$}};
\draw[fill=black]  (1,0) circle (2pt);
\node at (1,-0.3) {\tiny{$\mathbb{Z}_3$}};
\draw[fill=black]  (2,0) circle (2pt);
\node at (2,-0.3) {\tiny{$\mathbb{Z}_5$}};
\draw (0,0)--(1,0);
\draw (1,0)--(2,0);

\draw[fill=black]  (4,0) circle (2pt);
\node at (4,-0.3) {\tiny{$\mathbb{Z}_2\times\mathbb{Z}_3$}};
\draw[fill=black]  (6,0) circle (2pt);
\node at (6,-0.3) {\tiny{$\mathbb{Z}_3\times\mathbb{Z}_5$}};
\draw (4,0)--(6,0);
\node at (5,0.2) {\tiny{$\mathbb{Z}_3$}};

\node at (-0.3,1) {$\Gamma$};
\node at (3.7,1) {$\Delta$};
\end{tikzpicture}
\end{center}
\end{figure}

The graph product $G_\Gamma$ is isomorphic to  $(\mathbb{Z}_2\times\mathbb{Z}_3)*_{\mathbb{Z}_3}(\mathbb{Z}_3\times\mathbb{Z}_5)$. Clearly, the graph product $G_\Gamma$ is isomorphic to the tree of groups $\pi_1(\Delta)$.
\end{example}

Now we are ready to prove Theorem A. For arbitrary Coxeter groups similar result is proven in \cite[Theorem 1]{MT} and we follow the main ideas of that proof.

\begin{NewTheoremA}
Let $G_\Gamma$ be a graph product of non-trivial finitely generated groups. The graph product $G_\Gamma$ has more than one end if and only if either:
\begin{enumerate}
\item[(i)] The graph $\Gamma$ is complete with exactly one vertex group which has more than one end and all others vertex groups are finite, or
\item[(ii)] there are non-empty full subgraphs $\Gamma_1$ and $\Gamma_2$ of $\Gamma$  (neither containing the other) such that $G_\Gamma$ decomposes as the (non-trivial) amalgamated product 
\[
G_{\Gamma}\cong G_{\Gamma_1}*_{G_{\Gamma_1\cap\Gamma_2}} G_{\Gamma_2}
\]
where $G_{\Gamma_1\cap\Gamma_2}$ is a finite group. 
\end{enumerate}
\end{NewTheoremA}
\begin{proof} 
Clearly either (i) or (ii) imply that $G_\Gamma$ has more than one end.

For the other direction assume first that $\Gamma$ is complete and (i) does not hold. Then $\Gamma$ has at least two vertex groups which are infinite or $\Gamma$ has one $1$-ended vertex group and all other vertex groups are finite or all vertex groups of $\Gamma$ are finite. The group $G_\Gamma$ has by assumption more than one end, hence all above cases are impossible.
 
Now we assume that $\Gamma$ is not complete. The group $G_\Gamma$ is a non-trivial amalgamated product $A*_C B$ or HNN-extension $A*_C$ with $C$ finite, see Theorem 3.3. Thus $G_\Gamma$ acts via left-multiplication on the corresponding Bass-Serre tree $T$. If a subgroup $H\subseteq G_\Gamma$ stabilizes an edge, then $H$ is finite. This follows from the fact that the stabilizer of an edge is the group $gCg^{-1}$ for $g\in G_\Gamma$ which is by assumption finite. Further, if a subgroup $H\subseteq G_\Gamma$ is stabilizing a vertex $gA$ (resp. $gB$), then $H\subseteq gAg^{-1}$ (resp. $H\subseteq gBg^{-1}$). 

If there is a vertex-group $G_v$ which acts on $T$ without a fixed point, then by \cite[I \S 6 Corollary 2]{Serre} there exists an element $g\in G_v$ which acts on $T$ without a fixed point. Further $g$ is a hyperbolic isometry 
and there exists a unique geodesic line $A_g:\mathbb{R}\rightarrow T$ such that
$g(A_g(t))=A_g(t+a)$ for $a\in\mathbb{Z}-\left\{0\right\}$. The set $A_g(\mathbb{R})$ is called the axis of $g$. Note that if an element $h\in G_\Gamma$ commutes with $g$, then it leaves $A_g(\mathbb{R})$ invariant, see \cite[II theorem 7.1(4)]{BH}. Let ${\rm lk}(v)$ be the full subgraph of $\Gamma$ generated by the vertices which are adjacent to $v$. Thus the subgroup generated by $G_{{\rm lk}(v)}$ and $g$ acts on $A_g(\mathbb{R})$ by simplicial isometries. We obtain a map $\langle G_{{\rm lk}(v)}, g \rangle\rightarrow \mathbb{Z}_2*\mathbb{Z}_2$ whose kernel acts on the axis of $g$ trivially and is therefore by assumption finite. Thus the graph product $\langle G_{{\rm lk}(v)}, g \rangle$ is $2$-ended and by Proposition B follows that $G_{{\rm lk}(v)}$ is a finite group. We obtain a non-trivial decomposition
\[
G_\Gamma=G_{V-\left\{v\right\}}*_{G_{{\rm lk}(v)}} G_{{\rm lk}(v)\cup\left\{v\right\}}.
\] 

If all vertex groups act with a fixed point on $T$, then we consider the following set consisting of special subgroups of $G_\Gamma$:
\[
\mathcal{E}=\left\{G_{\Gamma'}\mid \Gamma'=(\left\{v,w\right\},\left\{\left\{v,w\right\}\right\})\subseteq\Gamma \text{ a subgraph}\right\}.
\]
Every group $G_{\Gamma_i}\in\mathcal{E}$ is a direct product of two vertex groups and therefore this group has a fixed point on $T$, see \cite[Corollary 4.4]{FV}. Hence, there exists a vertex $x_i\in T$ which is stabilized by the subgroup $G_{\Gamma_i}$.

We construct a tree of groups $\pi_1(\Delta)$ as follows:
\newline Let $\Delta'$ be the subtree of $T$ generated by the vertices $\left\{x_i\mid G_{\Gamma_i}\in\mathcal{E}\right\}$. So far we have constructed a finite tree. For a tree of groups decomposition we need a vertex and an edge-labelling. For each vertex $x\in\Delta'$ and each edge $e\in\Delta'$ take the maximal special subgroup of $G_\Gamma$ which stabilizes $x$ resp. $e$ and   collapse the edges where the edge group is equal to the corresponding vertex-group. Since $G_\Gamma$ is a non-trivial amalgam (resp. a non-trivial HNN-extension), the (smaller) tree $\Delta=(K,L)$ has at least $2$ vertices.

It is obvious that the edge and the vertex groups of $\Delta$ are special subgroups of $G_\Gamma$ and each edge group is canonically embedded into the corresponding vertex groups. Furthermore, the edge groups are finite subgroups. 

Our goal now is to prove that the tree of groups $\pi_1(\Delta)$ is isomorphic to $G_\Gamma$. We denote by $\Psi:K\rightarrow\left\{\text{special subgroups of }G_\Gamma\right\}$ the vertex-labelling and by $\Phi:L\rightarrow\left\{\text{special subgroups of }G_\Gamma\right\} $ the edge-labelling of $\Delta$.
 
Let $\phi:\pi_1(\Delta)\rightarrow G_\Gamma$ be the extending of the inclusion maps of the vertex groups $\Phi(k)$ into $G_\Gamma$. By construction of $\Delta$, each relator of $G_\Gamma$ is a relator of $\pi_1(\Delta)$. Further, for a vertex-group $G_v\in G_\Gamma$, the vertices and edges of $\Delta$ containing $G_v$ form a subtree. Hence, the map $\phi$ is an isomorphism.

Now we convert the tree of groups $\pi_1(\Delta)$ into an amalgam as follows: 
\newline We take an edge $e\in\Delta$. The graph $(K, L-\left\{e\right\})$ has two non-empty connected components $\Delta_1$ and $\Delta_2$. We obtain $\pi_1(\Delta)\cong\pi_1(\Delta_1)*_{\Phi(e)} \pi_1(\Delta_2)$. In particular, the groups  $\Phi(e), \pi_1(\Delta_1)$ and $\pi_1(\Delta_2)$ are special subgroups of $G_\Gamma$ and the defining graph of $\Phi(e)$ is a complete separating subgraph of $\Gamma$ the vertices of which are finite groups. 
\end{proof}

\section{Proof of Corollary C}

\begin{proof}
The first part of Corollary C is proven in Proposition \ref{finite} and the third part of Corollary C is proven in Proposition B. 

We shall prove parts (ii) and (iv). It follows from Theorem A that  $G_\Gamma$ has more than one end if and only if $\Gamma$ has a complete separating subgraph $\Delta$ the vertex groups of which are finite groups. 
Using the characterization of graph products $G_\Gamma$ with $e(G_\Gamma)=2$, see Proposition B, we obtain the statement of Corollary C(iv).

A characterization of graph products $G_\Gamma$ with $e(G_\Gamma)=1$ is a consequence of Corollary C (i), (iii) and (iv), since a finitely generated group has $0, 1, 2$ or infinite many ends.
\end{proof}

\end{document}